\documentclass[11pt]{article}



%

\usepackage{fullpage}
\usepackage{authblk}
\usepackage{natbib}
\usepackage{algorithm}
\usepackage{algpseudocode}
\usepackage{amsmath,amsthm,amsfonts}
\usepackage{amsmath}
\usepackage[subnum]{cases}
\usepackage{tcolorbox}
\usepackage{mathrsfs}
\usepackage{amssymb}
\usepackage{color}
\usepackage{mathrsfs}
\usepackage{enumitem}
\usepackage{bm}
\usepackage{multirow}
\usepackage{booktabs}
\usepackage{makecell}
\usepackage{graphicx}
\usepackage{subcaption}
\usepackage{comment}
\usepackage{cases}
\usepackage{appendix}
\usepackage{tikz}
\usetikzlibrary{arrows,shapes}
\usepackage[colorlinks= true, linkcolor=brown, citecolor=blue, urlcolor=black]{hyperref}
\usepackage{cleveref}
\usepackage{marginnote}
\usepackage{diagbox} 
\usepackage{empheq}


\numberwithin{equation}{section}

\newtheorem{theorem}{Theorem}[section]

\newtheorem{corollary}[theorem]{Corollary}

\usepackage{booktabs}
\usepackage{tikz}
\usepackage{pgfplots}
\usetikzlibrary{calc,intersections}

\usetikzlibrary{shapes.geometric, arrows, positioning}

\tikzset{
	mybox/.style  = {draw, rectangle, minimum width=6cm, minimum height=0.8cm, text centered, text width=6.4cm,   
		font=\normalsize},
	box/.style  = {draw, rectangle, minimum width=2.0cm, minimum height=0.6cm, text centered, text width=3.0cm,   
		font=\normalsize},
	myarrow/.style = {line width=0.2pt, draw=black, -triangle 60, postaction={draw, line width=0.2pt, shorten >=10pt,-}}
}

\tikzstyle{arrow} = [->, >=stealth, -triangle 60]

\allowdisplaybreaks

\setcounter{Maxaffil}{0}

\makeatletter
\newcommand{\leqnomode}{\tagsleft@true}
\newcommand{\reqnomode}{\tagsleft@false}
\makeatother

\begin{document}

\title{Revisiting the acceleration phenomenon via high-resolution differential equations\thanks{This work was supported by Grant No.YSBR-034 of CAS and Grant No.12288201 of NSFC.}}


\author[ ]{Shuo Chen \qquad Bin Shi\thanks{Corresponding author, Email: \url{shibin@lsec.cc.ac.cn}} \qquad Ya-xiang Yuan} 
\affil[ ]{State Key Laboratory of Scientific and Engineering Computing, Academy of Mathematics and Systems Science, Chinese Academy of Sciences, Beijing 100190, China}
\affil{University of Chinese Academy of Sciences, Beijing 100049, China}

\date\today

\maketitle

\begin{abstract}
Nesterov's accelerated gradient descent (\texttt{NAG}) is one of the milestones in the history of first-order algorithms. It was not successfully uncovered until the high-resolution differential equation framework was proposed in~\citep{shi2022understanding} that the mechanism behind the acceleration phenomenon is due to the gradient correction term. To deepen our understanding of the high-resolution differential equation framework on the convergence rate, we continue to investigate~\texttt{NAG} for the $\mu$-strongly convex function based on the techniques of Lyapunov analysis and phase-space representation in this paper.\footnote{Throughout this paper,~\texttt{NAG} without any special emphasization is only for the $\mu$-strongly convex function, where the rigorous definition is shown in~\Cref{subsec: notation-organization}. } First, we revisit the proof from the gradient-correction scheme. Similar to~\citep{chen2022gradient}, the straightforward calculation simplifies the proof extremely and enlarges the step size to $s=1/L$ with minor modification. Meanwhile, the way of constructing Lyapunov functions is principled. Furthermore, we also investigate~\texttt{NAG} from the implicit-velocity scheme. Due to the difference in the velocity iterates, we find that the Lyapunov function is constructed from the implicit-velocity scheme without the additional term and the calculation of iterative difference becomes simpler. Together with the optimal step size obtained, the high-resolution differential equation framework from the implicit-velocity scheme of~\texttt{NAG} is perfect and outperforms the gradient-correction scheme.

%
%
%
%
%
%
%
%
\end{abstract}

\section{Introduction}
\label{sec: intro}

In modern statistical machine learning, a predominant characteristic is that data sets and parameter spaces are oversized. Designing and improving efficient large-scale optimization algorithms is paramount for assessing these issues. Practically, it is often infeasible to compute and store the Hessians. Hence, gradient-based optimization has become prevalent due to its relatively cheap computation and storage. Recall the smooth unconstrained optimization problem is formulated as
\[
\min_{x\in\mathbb{R}^{d}}~f(x).
\]
In this paper, we consider the objective focusing on the $\mu$-strongly convex function. Perhaps the simplest first-order method is the vanilla gradient descent, implemented recursively with fixed step size $s$ as
\[
x_{k+1} = x_{k} - s\nabla f(x_{k}),
\]
given any initial point $x_{0} \in \mathbb{R}^d$. A simple squared distance brings about linear convergence as
\[
f(x_{k}) - f(x^\ast) \leq O\left((1-\mu s)^{k}\right).
\]
The modern idea of acceleration takes its rise from~\citep{polyak1964some}, where his heavy-ball method (or momentum method) locally accelerates the linear convergence. Afterward, the milestone work is Nesterov's accelerated gradient descent (\texttt{NAG})
\[
\left\{
\begin{aligned}
&x_{k+1} = y_{k}-s\nabla f(y_{k}), \\
&y_{k+1} = x_{k+1} + \frac{1-\sqrt{\mu s}}{1+\sqrt{\mu s}} \cdot(x_{k+1} - x_{k}),
\end{aligned}
\right.
\]
with any initial $x_0 = y_0 \in \mathbb{R}^d$, which globally accelerates the linear convergence to
\[
f(x_{k}) - f(x^\ast) \leq O\left((1-\sqrt{\mu s})^{k}\right).
\]
The proof is provided by~\citep{nesterov1998introductory} via extending his technique of~\textit{Estimate Sequence} for the convex function in~\citep{nesterov1983method} to the $\mu$-strongly convex function.\footnote{It should be noticed that~\citep{nesterov1983method} originates~\texttt{NAG} only focusing on the convex function. }


However, Nesterov's technique of~\textit{Estimate Sequence} is so algebraically complex that the cause of acceleration is still unclear. Recently,~\citep{shi2021understanding} proposed the high-resolution differential equation framework accompanied by the corresponding techniques of phase-space representation and Lyapunov analysis, which successfully lifted the veil of the mysterious acceleration phenomenon through the discovery of gradient correction. The gradient correction is small but essential because it exists in~\texttt{NAG} but is not included in Polyak’s heavy-ball method, which leads to a key difference in the derivative inequalities with the additional $O(\sqrt{s})$-term to cancel out the discrete error and preserve the acceleration.

In~\citep{shi2021understanding}, we also investigate~\texttt{NAG} for the convex function based on the high-resolution differential equation framework and dig out that the squared gradient norm converges with an inverse cubic rate. The proof is simplified straightforwardly and concisely in~\citep{chen2022gradient}, where an equivalent proof from the implicit-velocity scheme is also shown. Meanwhile, the step size is enlarged to $s=1/L$. For the convex case, the Lyapunov function based on the high-resolution differential equation and the phase-space representation is tailor-made for the~\texttt{NAG}. Moreover, a key inequality of the gradient step is found in~\citep{li2022proximal}, which generalizes the gradient norm minimization to composite optimization. Spontaneously, we hope to carry the perfect framework to the $\mu$-strongly convex case. As the motivation, this provokes us to proceed with further investigation, rigorously formulated by two questions:

\begin{tcolorbox}
\centering
\begin{itemize}
\item \textbf{Question-1}: Can the proof of~\citep[Theorem 3]{shi2021understanding} be simplified straightforwardly and concisely? Can the step size be improved to $s=1/L$?
\item \textbf{Question-2}:  Can the accelerated convergence rate be obtained from the implicit-velocity scheme of~\texttt{NAG}?
\end{itemize}
\end{tcolorbox}

\subsection{A principled way of constructing Lyapunov functions}
\label{subsec: principle-lyapunov}

While querying the answers for the above two questions, we find that the way to construct Lyapunov functions in~\citep{shi2021understanding} is principled instead of blind patching together. Here, we refine it to make a highlight. The Lyapunov functions constructed in~\citep{shi2021understanding} are mainly composed of potential, kinetic, and mixed energy, with an additional term in the discrete case. Indeed, the ordering of calculating the time derivative (continuous case) or the iterative difference (discrete case), regardless of the coefficients, should consider first mixed energy, then turn to kinetic energy, and finally, the coefficient of potential energy is set to match two previous terms with an additional term to balance out residual terms. The detailed calculation is shown as follows. 

\begin{itemize}
\item[\textbf{(1)}] For the mixed energy, it should be considered first as a squared norm of some combining terms. The time derivative of 
some combining terms for the continuous high-resolution differential equation, or the iterative difference for the discrete algorithm, equals a term only including the gradient. 
\item[\textbf{(2)}] Then, for the sake of matching the corresponding terms proportional, we must consider the kinetic energy. Otherwise, the squared norm of velocity does not appear in the time derivative or iterative difference of Lyapunov functions, while the velocity is included in the mixed energy. 
\item[\textbf{(3)}] Finally, the coefficient of potential energy is set to cancel out the inner product of gradient and velocity from the time derivative (or iterative difference) of two previous terms. An additional term for the discrete algorithm is set to balance out residual terms.
\end{itemize}

For the convex function, we also construct a Lyapunov function in the same principled way except for the kinetic energy in~\citep{shi2021understanding, chen2022gradient, li2022proximal}. This is because it only requires that the time derivative (or iterative difference) of the Lyapunov functions is no more than zero instead of matching the corresponding terms proportional.

\subsection{Main contributions}
\label{subsec: overview-contributions}
In this paper, we continue to study the convergence rate of \texttt{NAG} based on the high-resolution differential equation framework. Recall the Lyapunov function's coefficients in~\citep{shi2021understanding} include $1-\sqrt{\mu s}$ as the numerator, and then some coefficients probably degenerate to zero when the step size is set $s=1/L$ and the conditional number $\nu/L=1$. This leads to zero probably appearing in the denominator while matching the corresponding terms proportional, which does not make sense. To avoid the situation, we change the momentum coefficient from $\frac{1-\sqrt{\mu s}}{1 + \sqrt{\mu s}}$ to $\frac{1}{1+2\sqrt{\mu s}}$ and write it down as
\begin{equation}
\label{eqn: nag}
\left\{
\begin{aligned}
&x_{k+1} = y_{k}-s\nabla f(y_{k}), \\
&y_{k+1} = x_{k+1} + \frac{x_{k+1} - x_{k}}{1+2\sqrt{\mu s}} ,
\end{aligned}
\right.
\end{equation}
with any initial $x_0=y_0 \in \mathbb{R}^d$. Actually in the sense of asymptotic expansion, both the coefficients, $\frac{1}{1+2\sqrt{\mu s}}$ and $\frac{1-\sqrt{\mu s}}{1 + \sqrt{\mu s}}$, are equivalent. In other words, the modified scheme~\eqref{eqn: nag} is not essentially different from~\texttt{NAG}, which still keeps the first gradient step and the second momentum step. For convenience, we use the modified scheme~\eqref{eqn: nag} to note~\texttt{NAG} in the following part of this paper. Moreover, with~\eqref{eqn: nag}, we show that the proofs on the operation of complex coefficients are simplified extremely in~\Cref{sec: gradient-correction} and~\Cref{sec: implicit}. 

\paragraph{}In order to answer the two questions above, we make the following contributions:  

\paragraph{Proof from the gradient-correction scheme} The gradient-correction scheme of~\texttt{NAG} is rewritten down with the single sequence $\{y_k\}_{k=0}^{\infty}$ as
\begin{equation}
\label{eqn: nag-y}
y_{k+1} = y_{k} +  \frac{y_k - y_{k-1}}{1 + 2\sqrt{\mu s}} - s\cdot\nabla f(y_k) - s \cdot \frac{ \overbrace{\nabla f(y_{k}) - \nabla f(y_{k-1})}^{\textbf{gradient correction}} }{1 + 2\sqrt{\mu s}}.
\end{equation}
We revisit the proof shown in~\citep[Section 3]{shi2021understanding} and give a straightforward proof for the gradient-correction scheme~\eqref{eqn: nag-y} with a little modification, where the observation is consistent with the convex case~\citep{chen2022gradient}. An inequality for the first gradient step of~\texttt{NAG}~\eqref{eqn: nag} can be derived as
\begin{equation}
\label{eqn: gradient-inequality}
f(x_{k+1}) - f(x^\star) \leq f(y_k) - f(x^\star) - \frac{s}{2} \|\nabla f(y_k)\|^2.
\end{equation}
for any step size $0 < s\leq 1/L$. Together with the gradient inequality~\eqref{eqn: gradient-inequality}, we enlarge the step size to $s = 1/L$. Moreover, the discrete convergence rate is in full accord with the continuous high-resolution differential equation.

\paragraph{Proof from the implicit-velocity scheme} The implicit-velocity scheme of~\texttt{NAG} is rewritten down with the single sequence $\{x_k\}_{k=0}^{\infty}$ as 
\begin{equation}
\label{eqn: nag-x}
  x_{k+1} = x_k + \frac{x_k - x_{k-1}}{1 + 2\sqrt{\mu s}} - s\nabla f\bigg( x_k + \frac{\overbrace{x_k - x_{k-1}}^{\textbf{implicit velocity}}}{1 + 2\sqrt{\mu s}} \bigg).
\end{equation}
Similar to the convex case~\citep{chen2022gradient}, we start to consider the implicit-velocity scheme~\eqref{eqn: nag-x}. A corresponding implicit-velocity high-resolution differential equation with its continuous convergence rate is derived as guidance. And then, we find that the discrete Lyapunov function constructed from the implicit-velocity scheme~\eqref{eqn: nag-x} does not need to include the additional term. Moreover, without modifying the coefficients like in the case of the gradient-correction scheme~\eqref{eqn: nag-y}, we obtain the identically accelerated convergence rate and enlarge the step size to $s=1/L$. This is essentially due to the difference of kinetic energy in the Lyapunov function with both potential and mixed energies identical. More specifically, the sequence of velocity $\{v_k\}_{k=0}^{\infty}$ is generated from the sequence of $\{x_k\}_{k=0}^{\infty}$ for the implicit-velocity scheme~\eqref{eqn: nag-x} while from the sequence of $\{y_k\}_{k=0}^{\infty}$ for the gradient-correction scheme~\eqref{eqn: nag-y}. In addition, another advantage of the implicit-velocity scheme~\eqref{eqn: nag-x} is only three terms in the velocity iterates of phase-space representation without explicitly including the gradient-correction term, a difference of two adjacent gradients, which extremely reduces the computation of kinetic energy's iterative difference.

%
%
%
%
%
%


\paragraph{} In summary, we find that the implicit-velocity scheme is perfect and superior to the gradient-correction scheme with a simple comparison with two Lyapunov functions. Combined with the convex case~\citep{chen2022gradient, li2022proximal}, we conclude that the high-resolution differential equation framework from the implicit-velocity scheme~\eqref{eqn: nag-x} accompanied by the techniques of phase-space representation and Lyapunov function is tailor-made for~\texttt{NAG}.

\subsection{Notations and organization}
\label{subsec: notation-organization}

In this paper, we follow the notations used in~\citep{shi2021understanding}. Let $\mathcal{S}_{\mu, L}^1(\mathbb{R}^d)$ be the class of $L$-smooth and $\mu$-strongly convex functions defined on $\mathbb{R}^d$; that is, $f \in \mathcal{S}_{\mu, L}^1$ if for any $ x,y\in\mathbb{R}^{d}$, its objective value satisfies $f(y)\geq f(x)+\langle\nabla f(x), y-x\rangle+\frac{\mu}{2}\|y-x\|^{2}$ and its gradient is $L$-Lipschitz continuous in the sense that $\|\nabla f(y) - \nabla f(x)\|\leq L\|y-x\|$, where $\|\cdot\|$ denotes the standard Euclidean norm and $\mu>0$ and $L > 0$ are the strong convex constant and the Lipschitz constant respectively. Besides, $x^\star$ denotes the minimizer of the convex function $f$. 

The remainder of this paper is organized as follows. In~\Cref{sec: related-work}, some related research works are described. A straightforward proof from the gradient-correction scheme is provided in~\Cref{sec: gradient-correction}. In~\Cref{sec: implicit}, we derive the convergence rate from the implicit-velocity scheme, which is guided by its corresponding high-resolution implicit-velocity differential equation.  In~\Cref{sec: discussion}, we briefly discuss the coexistence of acceleration and monotonicity and then conclude the paper with some future researches. 


\section{Related works}
\label{sec: related-work}

Within the first-order information, perhaps it dates from a two-step gradient method, the Ravine method in~\citep{gelfand1962some}, to improve the single-step gradient descent. Until in~\citep{nesterov1983method},~\texttt{NAG} for the convex function is not proposed to bring about the global acceleration phenomenon, which is generalized to the $\mu$-strongly convex function in~\citep{nesterov1998introductory}. As far as~\texttt{NAG} itself is concerned, the research papers on the $\mu$-strongly convex function are far less than the convex function. However, from the structure of algorithms,~\citep{nesterov2010recent} points out that~\texttt{NAG} for the $\mu$-strongly convex function and Polyak's heavy-ball method~\citep{polyak1964some} are close and easy to compare. And then, this view is emphasized further in~\citep{jordan2018dynamical}. Finally, the veil behind the acceleration phenomenon is lifted by the discovery of gradient correction on the basis of the high-resolution differential equation framework in~\citep{shi2021understanding}.

For the $\mu$-strongly convex function, the technique of the Lyapunov function is first used to analyze the acceleration phenomenon~\citep{wilson2021lyapunov}. Following the high-resolution differential equation framework~\citep{shi2021understanding}, several research works from the view of numerical analysis are proposed to investigate further in~\citep{luo2021differential, zhang2021revisiting}.  It is proposed by combining the low-resolution differential equation and the continuous limit of the Newton method to design and analyze new algorithms in~\citep{attouch2020first}, where the terminology is called inertial dynamics with a Hessian-driven term. Although the inertial dynamics with a
Hessian-driven term resembles closely with the high-resolution differential equations in~\citep{shi2021understanding}, it is important to note that the Hessian-driven terms are from the second-order information of Newton’s method~\citep{attouch2014dynamical}, while the gradient correction entirely relies on the first-order information of Nesterov's accelerated gradient descent method itself.  The implicit-velocity view from the continuous differential equation is first proposed in~\citep{muehlebach2019dynamical}. When the noise is added, the momentum scheme shows more power for the nonconvex function in~\citep{shi2021hyperparameters}. In addition, the momentum-based scheme is also generalized to residual neural networks~\citep{xia2021heavy}.

\section{A straightforward proof from the gradient-correction scheme}
\label{sec: gradient-correction}

In this section, we provide a straightforward proof from the gradient-correction scheme~\eqref{eqn: nag-y} with a little modification. Particularly in~\Cref{subsec: principle-lyapunov}, we show how to construct Lyapunov functions in the principled way. And then, the step size enlarged to $s = 1/L$ is shown in~\Cref{subsec: step-size-L}. Here, we ignore to demonstrate the convergence rate of the continuous high-resolution differential equation since there is no essential difference from~\citep[Theorem 1]{shi2021understanding} except for some modified coefficients.

Before proceeding to construct a Lyapunov function, we make a minor modification  for the gradient-correction scheme~\eqref{eqn: nag-y} as
\begin{equation}
\label{eqn: nag-y-modify}
y_{k+1} = y_{k} +  \frac{y_k - y_{k-1}}{1 + 2\sqrt{\mu s}} - s\cdot\frac{1}{\underbrace{1 + 2\sqrt{\mu s}}_{\textbf{modification}}}\nabla f(y_{k}) - s \cdot \frac{ \overbrace{\nabla f(y_{k}) - \nabla f(y_{k-1})}^{\textbf{gradient correction}} }{1 + 2\sqrt{\mu s}}.
\end{equation}
Taking the velocity iterate as $v_{k} = (y_{k+1} - y_{k})/\sqrt{s}$, the phase-space representation of~\eqref{eqn: nag-y-modify} is written down as
\begin{equation}
\label{eqn: phase-nag-y}
\left\{
\begin{aligned}
           & y_{k} - y_{k-1} = \sqrt{s} v_{k-1},  \\
           & v_{k} - v_{k-1} = - 2\sqrt{\mu s}v_{k} - \sqrt{s}(\nabla f(y_{k}) - f(y_{k-1})) - \sqrt{s}\nabla f(y_{k}),
\end{aligned}
\right.
\end{equation}
with any initial $y_0 = x_0 \in \mathbb{R}^d$ and $v_0 = - \frac{\sqrt{s}}{1+2\sqrt{\mu s}} \cdot \nabla f(x_0)$. Subsequently, we show how to construct a Lyapunov function in the principled way.

\subsection{The principled way of constructing a Lyapunov function} 
\label{subsec: principle-lyapunov}
Corresponding to the phase-space representation~\eqref{eqn: phase-nag-y}, a Lyapunov function is mainly composed of potential, kinetic, and mixed energy, with the mathematical expressions shown in~\Cref{tab: Lyapunov}.
\begin{table}[htbp!]
\centering
\begin{tabular}{l|l}
   \toprule
   Potential Energy                    & $f(y_k) - f(x^\star)$       \\ 
   \midrule
   Kinetic Energy                        & $\frac{1}{2}\|v_{k}\|^2$     \\   
   \midrule                       
   Mixed Energy                          & $\frac{1}{2}\|v_{k} + 2\sqrt{\mu}\left(y_{k+1} - x^\star\right) + \sqrt{s}\nabla f(y_{k})\|^2$          \\
   \bottomrule
\end{tabular}
\caption{The mathematical expression of main ingredients composing a Lyapunov function.}
\label{tab: Lyapunov}
\end{table}

%
%
%

\begin{itemize}
\item[\textbf{(1)}] First, we consider the mixed energy. The core crux of calculating the iterative difference is
\begin{multline*}
     \big[v_{k+1} + 2\sqrt{\mu}(y_{k+2} - x^\star) + \sqrt{s}\nabla f(y_{k+1})\big] - \left[v_{k} + 2\sqrt{\mu}(y_{k+1} - x^\star) + \sqrt{s}\nabla f(y_{k})\right] \\
   =  (v_{k+1} - v_{k}) + 2\sqrt{\mu} (y_{k+2} - y_{k+1}) + \sqrt{s}(\nabla f(y_{k}) - f(y_{k-1})) =  -\sqrt{s}\nabla f(y_{k+1}),
\end{multline*}
where the last equality follows the phase-space representation~\eqref{eqn: phase-nag-y}. Hence, the iterative difference of the mixed energy obeys
\begin{align}
\mathbf{I} = & \frac{1}{2}\|v_{k+1} + 2\sqrt{\mu}\left(y_{k+2} - x^\star\right) + \sqrt{s}\nabla f(y_{k+1})\|^2 - \frac{1}{2}\|v_{k} + 2\sqrt{\mu}\left(y_{k+1} - x^\star\right) + \sqrt{s}\nabla f(y_{k})\|^2  \nonumber\\
= & \left\langle- \sqrt{s}\nabla f(y_{k+1}), v_{k} + 2\sqrt{\mu}(y_{k+1} - x^\ast) + \sqrt{s}\nabla f(y_{k})\right\rangle  + \frac{s}{2}\|\nabla f(y_{k+1})\|^{2}\nonumber\\
    = & -\underbrace{\sqrt{s}\langle\nabla f(y_{k+1}), v_{k}\rangle}_{\textbf{I}_{1}}-\underbrace{ 2\sqrt{\mu s}\langle\nabla f(y_{k+1}), y_{k+1} - x^\star\rangle}_{\textbf{I}_{2}} \nonumber\\
    & -  s\langle\nabla f(y_{k+1}), \nabla f(y_{k})\rangle + \frac{s}{2}\|\nabla f(y_{k+1})\|^{2}. \label{eqn:eqn: mix-gc1}
\end{align}
Here, we adopt the forward calculation in the second equality of~\eqref{eqn:eqn: mix-gc1}, which is the key difference from~\citep{shi2021understanding}. 

\item[\textbf{(2)}] Then, we consider the kinetic energy. Consistently, different from~\citep{shi2021understanding}, we also adopt the forward calculation for the iterative difference as
\begin{align}
    \mathbf{II} = & \langle v_{k+1} - v_{k}, v_{k}\rangle + \frac{1}{2}\|v_{k+1} - v_{k}\|^2 \nonumber \\
  = & - \underbrace{\sqrt{s}\left\langle\nabla f(y_{k+1}), v_{k}\right\rangle}_{\textbf{II}_{1}}  - \underbrace{\sqrt{s}\left\langle\nabla f(y_{k+1}) - \nabla f(y_{k}), v_{k}\right\rangle}_{\textbf{II}_{2}} \nonumber \\& - 2\sqrt{\mu s}\left\langle v_{k+1}, v_{k}\right\rangle+ \frac{1}{2}\left\|v_{k+1} - v_{k}\right\|^2. \label{eqn: kin-gc1}
\end{align}


We merge the last two terms of~\eqref{eqn: kin-gc1} together and calculate it as
\begin{align*}
- 2\sqrt{\mu s}\langle v_{k+1}, v_{k}\rangle + \frac{1}{2}\|v_{k+1} - v_{k}\|^2 & = \frac{1}{2}\|v_{k+1}\|^{2} + \frac{1}{2}\|v_{k}\|^{2} - \left(1+2\sqrt{\mu s}\right)\langle v_{k+1}, v_{k}\rangle \\
    & =  \frac{1}{2}\left\|(1+2\sqrt{\mu s})v_{k+1} - v_{k}\right\|^{2} - 2\sqrt{\mu s}(1+\sqrt{\mu s})\|v_{k+1}\|^{2} \\
    & =  \frac{s}{2}\left\|2\nabla f(y_{k+1}) - \nabla f(y_{k}) \right\|^{2} -\underbrace{2\sqrt{\mu s}(1+\sqrt{\mu s})\|v_{k+1}\|^{2}}_{\textbf{II}_{3}},
\end{align*}
where the last equality follows the second scheme of~\eqref{eqn: phase-nag-y}. Hence, the iterative difference~\eqref{eqn: kin-gc1} can be further reformulated as
\begin{align}
    \mathbf{II}  = & - \underbrace{\sqrt{s}\left\langle\nabla f(y_{k+1}), v_{k}\right\rangle}_{\textbf{II}_{1}}  - \underbrace{\sqrt{s}\left\langle\nabla f(y_{k+1}) - \nabla f(y_{k}), v_{k}\right\rangle}_{\textbf{II}_{2}} \nonumber \\& + \frac{s}{2}\left\|2\nabla f(y_{k+1}) - \nabla f(y_{k}) \right\|^{2} - \underbrace{ 2\sqrt{\mu s}(1+\sqrt{\mu s})\|v_{k+1}\|^{2}}_{\textbf{II}_{3}}. \label{eqn: ken-gc2}
\end{align}

\item[\textbf{(3)}] Finally, we calculate the iterative difference of potential energy as
\begin{align}
\mathbf{III}  = f(y_{k+1}) - f(y_{k}) &\leq  \langle\nabla f(y_{k+1}), y_{k+1} - y_{k}\rangle - \frac{1}{2L}\|\nabla f(y_{k+1})-\nabla f(y_{k})\|^{2} \nonumber \\
    &\leq  \underbrace{\sqrt{s}\langle\nabla f(y_{k+1}), v_{k}\rangle}_{\textbf{III}_{1}} -  \underbrace{\frac{1}{2L}\|\nabla f(y_{k+1})-\nabla f(y_{k})\|^{2}}_{\textbf{III}_{2}}. \label{eqn: pot-gc}
\end{align}

\end{itemize}

Taking a glance at the three inequalities of iterative differences~\eqref{eqn:eqn: mix-gc1},~\eqref{eqn: kin-gc1}, and~\eqref{eqn: pot-gc}, we can find $\mathbf{I}_1 = \mathbf{II}_1 = \mathbf{III}_1$. Checking the signs of them further, the three terms should satisfy $ \mathbf{III}_1 = \alpha\mathbf{I}_1 + \beta \mathbf{II}_1$ with $\alpha + \beta = 1$, which is consistent with the general setting in physics that the coefficient of total kinetic energy $\|v_k\|^2$ is $1/2$. Moreover, the straightforward process above agrees with the continuous case~\citep[Theorem 1]{shi2021understanding} and extremely simplifies the calculation for the discrete algorithm~\citep[Theorem 3]{shi2021understanding}.  Throughout the paper, we continue to use the setting of coefficients $\alpha = \beta = 1/2$ in~\citep{shi2021understanding}.\footnote{Both the coefficients, $\alpha$ and $\beta$, may not be set to $1/2$, which can be changed with the only requirement $\alpha + \beta = 1$} And then, the Lyapunov function is constructed as 
\begin{equation}
\label{eqn: lyp-gc}
\mathcal{E}(k) =    f(y_{k}) - f(x^\star) + \frac{1}{4}\|v_{k}\|^2 + \frac{1}{4}\left\|v_{k} + 2\sqrt{\mu}\left(y_{k+1} - x^\star\right) + \sqrt{s}\nabla f(y_{k})\right\|^2  - \underbrace{\frac{s}{2}\|\nabla f(y_{k})\|^{2}}_{\textbf{additional term}}.
\end{equation}
Moreover, we will find how the additional term is added in the following part of this section.


\subsection{Enlarge step size to $s = 1/L$} 
\label{subsec: step-size-L}
With the Lyapunov function~\eqref{eqn: lyp-gc}, we enlarge the step size to $s = 1/L$ with the following theorem.

\begin{theorem}
\label{thm: rate-gc}
Let $f\in\mathcal{S}_{\mu,L}^{1}$. For any step size $0 < s\leq\frac{1}{L}$, the iterative sequence $\{x_{k}\}_{k=0}^{\infty}$ generated by the  modified~\texttt{NAG}~\eqref{eqn: nag-y-modify} obeys
\begin{equation}
\label{eqn: rate-gc}
f(x_k) - f(x^\star)  \leq  \frac{ 2\left(f(x_0) - f(x^\star) + \mu \|x_0 - x^\star\|^2\right)}{\left(1 + \frac{\sqrt{\mu s}}{4} \right)^{k}},
\end{equation}
for any $k \geq 0$.
\end{theorem}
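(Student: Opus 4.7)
The plan is to prove a one-step contraction $\mathcal{E}(k+1)\leq \mathcal{E}(k)/(1+\sqrt{\mu s}/4)$ for the Lyapunov function~\eqref{eqn: lyp-gc}, and then convert this into the bound~\eqref{eqn: rate-gc} on $f(x_k)-f(x^\star)$ using the gradient-step inequality~\eqref{eqn: gradient-inequality} together with a direct estimate of the initial value $\mathcal{E}(0)$.

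First I would assemble $\mathcal{E}(k+1)-\mathcal{E}(k)$ as $\tfrac{1}{2}\mathbf{I}+\tfrac{1}{2}\mathbf{II}+\mathbf{III}$ plus the iterative difference $-\tfrac{s}{2}(\|\nabla f(y_{k+1})\|^2-\|\nabla f(y_k)\|^2)$ of the additional term. With the coefficient choice $\alpha=\beta=1/2$, the three "velocity--gradient" pieces $\mathbf{I}_1$, $\mathbf{II}_1$, $\mathbf{III}_1$ from~\eqref{eqn:eqn: mix-gc1},~\eqref{eqn: ken-gc2}, and~\eqref{eqn: pot-gc} cancel one another by design. The contraction-driving term is $\mathbf{I}_2$: applying strong convexity
\[
\langle\nabla f(y_{k+1}),y_{k+1}-x^\star\rangle \geq f(y_{k+1})-f(x^\star)+\tfrac{\mu}{2}\|y_{k+1}-x^\star\|^2
\]
produces a $-\sqrt{\mu s}$ multiple of the potential energy plus a $-\sqrt{\mu s}\cdot\mu\|y_{k+1}-x^\star\|^2$ term, which I would further trade, via strong convexity and by expanding the mixed-energy vector, for negative contributions to the kinetic and mixed energies at index $k+1$. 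The cross term $\mathbf{II}_2$ I would control through $L$-smoothness, $\|\nabla f(y_{k+1})-\nabla f(y_k)\|\leq L\sqrt{s}\|v_k\|$, so that it can be absorbed into the negative kinetic term $\mathbf{II}_3$.

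Next I would handle the $\|\nabla f\|^2$-type residuals coming from four sources: $-s\langle\nabla f(y_{k+1}),\nabla f(y_k)\rangle+\tfrac{s}{2}\|\nabla f(y_{k+1})\|^2$ from $\mathbf{I}$, $\tfrac{s}{2}\|2\nabla f(y_{k+1})-\nabla f(y_k)\|^2$ from $\mathbf{II}$, the cocoercivity term $-\tfrac{1}{2L}\|\nabla f(y_{k+1})-\nabla f(y_k)\|^2$ from $\mathbf{III}$, and $-\tfrac{s}{2}\|\nabla f(y_{k+1})\|^2+\tfrac{s}{2}\|\nabla f(y_k)\|^2$ from the additional term. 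Under the step-size restriction $s\leq 1/L$, the cocoercivity coefficient $\tfrac{1}{2L}$ is large enough for these groups to regroup into a non-positive combination, and the additional term is tailor-made to cancel the asymmetric $\|\nabla f(y_{k+1})\|^2-\|\nabla f(y_k)\|^2$ imbalance. What survives is at worst $-\tfrac{\sqrt{\mu s}}{4}\mathcal{E}(k+1)$, which immediately rearranges to the one-step contraction.

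Finally, iterating yields $\mathcal{E}(k)\leq \mathcal{E}(0)/(1+\sqrt{\mu s}/4)^k$; the gradient-step inequality~\eqref{eqn: gradient-inequality} combined with the non-negativity of the kinetic and mixed energies gives $f(x_{k+1})-f(x^\star)\leq \mathcal{E}(k)$; and a direct expansion at $k=0$ using $y_0=x_0$ together with $v_0=-\sqrt{s}\nabla f(x_0)/(1+2\sqrt{\mu s})$ makes the mixed-energy vector collapse to exactly $2\sqrt{\mu}(x_0-x^\star)$, so that $\mathcal{E}(0)\leq f(x_0)-f(x^\star)+\mu\|x_0-x^\star\|^2$. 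The factor of $2$ in~\eqref{eqn: rate-gc} then absorbs the single index shift through the crude bound $1+\sqrt{\mu s}/4\leq 2$. The hard part will be the bookkeeping in the preceding paragraph: one has to verify that after invoking strong convexity, $L$-smoothness, and $s\leq 1/L$, the residual gradient-squared and velocity-squared terms actually combine into a non-positive sum \emph{and} leave exactly enough $\sqrt{\mu s}$-scaled negative mass to extract a full copy of $\tfrac{1}{4}\mathcal{E}(k+1)$, thereby realizing the contraction rate $\tfrac{\sqrt{\mu s}}{4}$ rather than some strictly smaller one.
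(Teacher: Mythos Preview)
Your overall plan matches the paper's: assemble $\tfrac{1}{2}\mathbf{I}+\tfrac{1}{2}\mathbf{II}+\mathbf{III}$ plus the additional-term difference, cancel $\mathbf{I}_1,\mathbf{II}_1,\mathbf{III}_1$, use $\mathbf{I}_2$ and $\mathbf{II}_3$ to generate the $\sqrt{\mu s}$-contraction, control the gradient residuals, and finish with the gradient-step inequality and the initial condition. Your endgame (the collapse of the mixed-energy vector at $k=0$ to $2\sqrt{\mu}(x_0-x^\star)$ and the index shift via $1+\sqrt{\mu s}/4\le 2$) is correct and is exactly what the paper does.

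There is, however, a concrete gap in the middle. You propose to bound $\mathbf{II}_2=\sqrt{s}\,\langle\nabla f(y_{k+1})-\nabla f(y_k),v_k\rangle$ via the Lipschitz inequality $\|\nabla f(y_{k+1})-\nabla f(y_k)\|\le L\sqrt{s}\,\|v_k\|$ and absorb it into $\mathbf{II}_3$. This goes in the wrong direction: since $y_{k+1}-y_k=\sqrt{s}\,v_k$, monotonicity of $\nabla f$ already gives $-\tfrac12\mathbf{II}_2\le 0$, so a Lipschitz/Cauchy--Schwarz upper bound only discards a helpful negative term (and $\mathbf{II}_3$ involves $\|v_{k+1}\|^2$, not $\|v_k\|^2$, so the indices do not even match). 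The paper instead exploits cocoercivity in the \emph{opposite} direction,
\[
-\tfrac{1}{2}\mathbf{II}_2 \;=\; -\tfrac{1}{2}\big\langle\nabla f(y_{k+1})-\nabla f(y_k),\,y_{k+1}-y_k\big\rangle \;\le\; -\tfrac{1}{2L}\|\nabla f(y_{k+1})-\nabla f(y_k)\|^2,
\]
which together with $\mathbf{III}_2$ yields a total of $-\tfrac{1}{L}\|\nabla f(y_{k+1})-\nabla f(y_k)\|^2$. This second copy of $\tfrac{1}{2L}$ is indispensable: the $\|\nabla f\|^2$-residuals from $\tfrac{1}{2}\mathbf{I}$, $\tfrac{1}{2}\mathbf{II}$, and the additional term add up exactly to $\tfrac{3s}{4}\|\nabla f(y_{k+1})-\nabla f(y_k)\|^2$, so the single $\tfrac{1}{2L}$ from $\mathbf{III}$ alone forces $s\le \tfrac{2}{3L}$, not the claimed $s\le \tfrac{1}{L}$. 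In short, your assertion that ``the cocoercivity coefficient $\tfrac{1}{2L}$ is large enough'' is false at $s=1/L$; you need $\mathbf{II}_2$ (via cocoercivity, not Lipschitz) to supply the missing $\tfrac{1}{2L}$, exactly as in the paper's inequality~\eqref{eqn: total-gc-3}.
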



\begin{proof}[Proof of~\Cref{thm: rate-gc}]
To make the proof clear, we present it in three steps. 

\begin{itemize}

\item[\textbf{(1)}] Based on~\eqref{eqn:eqn: mix-gc1} and~\eqref{eqn: ken-gc2}, we can merge two terms as
\begin{equation}
\label{eqn: total-gc-2}
- \frac{1}{2}\mathbf{I}_{2} - \frac{1}{2}\mathbf{II}_{3} =  - \sqrt{\mu s}\left[(1+\sqrt{\mu s})\|v_{k+1}\|^{2} + \langle\nabla f(y_{k+1}), y_{k+1} - x^\star\rangle\right].
\end{equation}
With the basic inequality of $L$-smooth functions, we can take an estimate as
\begin{equation}
\label{eqn: total-gc-3}
- \frac{1}{2}\mathbf{II}_{2} = -\frac{\sqrt{s}}{2}\langle\nabla f(y_{k+1})-\nabla f(y_{k}), v_{k}\rangle\leq -\frac{1}{2L}\|\nabla f(y_{k+1}) - \nabla f(y_{k})\|^{2}.
\end{equation}
Then, we summarize the reminders together except for $\mathbf{III}_2$ as
 \begin{multline}
   \frac{s}{4}\left\|2\nabla f(y_{k+1}) - \nabla f(y_{k}) \right\|^{2} - \frac{s}{2}\langle\nabla f(y_{k+1}), \nabla f(y_{k})\rangle  + \frac{s}{4}\|\nabla f(y_{k+1})\|^{2}  \\
=   \frac{5s}{4}\|\nabla f(y_{k+1})\|^{2} - \frac{3s}{2}\langle\nabla f(y_{k+1}), \nabla f(y_{k})\rangle + \frac{s}{4}\|\nabla f(y_{k})\|^{2}; \label{eqn: total-gc-4}
\end{multline}
and with~\eqref{eqn: total-gc-3}, we collect the like terms as
\begin{equation}
\label{eqn: total-gc-5}
- \frac{1}{2}\mathbf{II}_{2} - \mathbf{III}_2 \leq -\frac{1}{L}\|\nabla f(y_{k+1}) - \nabla f(y_{k})\|^{2}.
\end{equation}
To make the term~\eqref{eqn: total-gc-4} likely as~\eqref{eqn: total-gc-5} for collecting, we naturally add a term 
\begin{equation}
\label{eqn: add-diff}
- \frac{s}{2}\left(\|\nabla f(y_{k+1})\|^{2} - \|\nabla f(y_{k})\|^{2} \right),
\end{equation}
which corresponds to the additional term in the Lyapunov function~\eqref{eqn: lyp-gc}. Finally, using~\eqref{eqn: total-gc-2},~\eqref{eqn: total-gc-4},~\eqref{eqn: total-gc-5} and~\eqref{eqn: add-diff}, we obtain that the iterative difference of the Lyapunov function satisfies 
\begin{align}
\mathcal{E}(k+1) - \mathcal{E}(k) \leq & - \sqrt{\mu s}\left[(1+\sqrt{\mu s})\|v_{k+1}\|^{2} + \langle\nabla f(y_{k+1}), y_{k+1} - x^\star\rangle\right] \nonumber \\
                                       & - \left( \frac{1}{L} - \frac{3s}{4}\right)\|\nabla f(y_{k+1}) - \nabla f(y_{k})\|^{2}      \label{eqn: total-gc-6}.
\end{align}
Obviously, when the step size satisfies $s \leq 1/L$, the second line of~\eqref{eqn: total-gc-6} is no more than zero.

\item[\textbf{(2)}]  With the phase-space representation~\eqref{eqn: phase-nag-y}, we can estimate the mixed energy by Cauchy-Schwarz inequality as
\begin{align*}
      \left\|v_{k} + 2\sqrt{\mu}\left(y_{k+1} - x^\star\right) + \sqrt{s}\nabla f(y_{k})\right\|^2  &  =    \left\|(1+2\sqrt{\mu s})v_{k} + 2\sqrt{\mu}\left(y_{k} - x^\star\right) + \sqrt{s}\nabla f(y_{k})\right\|^2 \\
&\leq  3(1+2\sqrt{\mu s})^2 \left\|v_{k}\right\|^2 + 12\mu \|y_{k} - x^\star\|^2 + 3s\left\|\nabla f(y_{k})\right\|^2.
\end{align*}
Furthermore, the Lyapunov function~\eqref{eqn: lyp-gc} can be estimated as
\begin{equation}
\label{eqn: lyapunov-gc-estimate}
\mathcal{E}(k) \leq   f(y_{k}) - f(x^\star) +\frac{s\|\nabla f(y_{k})\|^{2}}{4} + (1 + 3\sqrt{\mu s} + 3\mu s)\|v_k\|^2 +3\mu \|y_k - x^\star\|^2               
\end{equation}
With the two following inequalities for $f \in \mathcal{S}_{\mu,L}^{1}$,
\begin{align*}
\langle\nabla f(y_{k+1}), y_{k+1} - x^\star\rangle \geq & \mu\|y_{k+1} - x^\star\|^2, \\
\langle\nabla f(y_{k+1}), y_{k+1} - x^\star\rangle \geq &f(y_{k+1}) - f(x^\star) + \frac{1}{2L}\|\nabla f(y_{k+1})\|^{2},
\end{align*}
the iterative difference~\eqref{eqn: total-gc-6} can be estimated further as
\begin{align}
&\mathcal{E}(k+1) - \mathcal{E}(k)\leq  \nonumber \\
& - \sqrt{\mu s}\left[ \frac{f(y_{k+1}) - f(x^\star)}{4} + \frac{\|\nabla f(y_{k+1})\|^{2}}{8L} + (1+\sqrt{\mu s})\|v_{k+1}\|^{2} + \frac{3\mu\|y_{k+1} - x^\star\|^2}{4}\right]. \label{eqn: total-gc-7}
\end{align}
 Comparing the coefficients of the corresponding terms in~\eqref{eqn: total-gc-7} and~\eqref{eqn: lyapunov-gc-estimate} for $\mathcal{E}(k+1)$,  we conclude that the iterative difference of the discrete Lyapunov function as
\begin{align}
\mathcal{E}(k+1) - \mathcal{E}(k)& \leq - \sqrt{\mu s} \min\left\{ \frac14, \frac{1}{2sL}, \frac{1+\sqrt{\mu s}}{1 + 3\sqrt{\mu s} + 3\mu s}, \frac14 \right\} \mathcal{E}(k+1) \nonumber \\
                                 & \leq - \sqrt{\mu s} \min\left\{ \frac14, \frac{1}{2sL}, \frac{1}{(1 + \sqrt{\mu s})^2}, \frac14 \right\} \mathcal{E}(k+1) \nonumber \\
                                 & \leq - \frac{\sqrt{\mu s}}{4} \mathcal{E}(k+1), \label{eqn: total-gc-8}
\end{align}
where the last inequality follows from $0< s \leq 1/L$.

\item[\textbf{(3)}] It follows from the Lyapunov function~\eqref{eqn: lyp-gc} and the total iterative difference~\eqref{eqn: total-gc-8} that the following inequality holds
\[
f(y_k) - f(x^\star) - \frac{s}{2} \|\nabla f(y_k)\|^2 \leq \mathcal{E}(k) \leq \frac{\mathcal{E}(0)}{\left( 1 + \frac{\sqrt{\mu s}}{4} \right)^{k}}.
\]
With the $L$-smooth condition, we can obtain the following inequality by the gradient step in~\texttt{NAG}~\eqref{eqn: nag}, $x_{k+1} = y_{k} - s\nabla f(y_k)$, as
\[
f(x_{k+1}) - f(x^\star) \leq f(y_k) - f(x^\star) - \frac{s}{2} \|\nabla f(y_k)\|^2 \leq \frac{\mathcal{E}(0)}{\left( 1 + \frac{\sqrt{\mu s}}{4} \right)^{k}} \leq \frac{2\mathcal{E}(0)}{\left( 1 + \frac{\sqrt{\mu s}}{4} \right)^{k+1}}.
\]
Putting the initial conditions, $y_0 = x_0$ and $v_0 = - \frac{\sqrt{s} \nabla f(x_0)}{1+2\sqrt{\mu s}}$, into $\mathcal{E}(0)$, we complete the proof. 
\end{itemize}
\end{proof}

%


\paragraph{Remark}
Two imperfect points appear in the proof above from the modified gradient-correction scheme~\eqref{eqn: nag-y-modify}. 
\begin{itemize}
\item[\textbf{(1)}] In the modified~\texttt{NAG}~\eqref{eqn: nag-y-modify}, we use $1+2\sqrt{\mu s}$ instead of $1$ for the coefficient of gradient term to enlarge the step size to $s = 1/L$. Otherwise, the inequality~\eqref{eqn: total-gc-6} will degenerate to the case appearing in~\citep[Proof of Lemma 3]{shi2021understanding}, that is, if the step size is set to $s=1/L$,
\[
\left\langle \nabla f(x_{k+1}), x_{k+1} - x^\star \right\rangle - s\|\nabla f(x_{k+1})\|^2 \geq 0, 
\]
where equality works for the worst case so that it is not possible to make the corresponding terms proportional.
\item[\textbf{(2)}] In the first step of the proof, we can find the additional term is necessary. Otherwise, we cannot guarantee the iterative difference is no more than zero. In addition, we use the inequality of gradient step~\eqref{eqn: nag} in the third step. Hence, the convergence rate is obtained only by the objective value $f(x_{k+1})$ since the objective values, $f(x_{k+1})$ and $f(y_{k})$, only satisfies one-way inequality. 
\end{itemize}

\section{The proof from the implicit-velocity scheme}
\label{sec: implicit}

In this section, we investigate the iterative behavior from the implicit scheme~\eqref{eqn: nag-x}. To analyze the discrete~\texttt{NAG}, an implicit-velocity high-resolution differential equation with its convergence rate is first provided as guidance. Then, we demonstrate a concise proof of the convergence rate from the implicit-velocity scheme~\eqref{eqn: nag-x}.

\subsection{The implicit-velocity high-resolution differential equation}
\label{subsec: iv-hode}

Similar to~\citep{chen2022gradient} for the convex function, plugging the high-resolution Taylor expansion into~\eqref{eqn: nag-x} and taking the $O(\sqrt{s})$-approximation, we can obtain an implicit-velocity high-resolution differential equation as 
\begin{equation}
\label{eqn: iv-high-ode-original}
(1+\sqrt{\mu s})\ddot{X} + 2\sqrt{\mu}\dot{X} +  (1 + 2\sqrt{\mu s})\nabla f\bigg(X + \underbrace{\frac{\sqrt{s}\dot{X}}{1+2\sqrt{\mu s}}}_{\textbf{implicit velocity}} \bigg) = 0,
\end{equation}
with any initial $X(0)=x_0 \in \mathbb{R}^d$ and $\dot{X}(0)=0$. Since the continuous case is used only as a guide, we simplify~\eqref{eqn: iv-high-ode-original} to show the derivation clearly by ignoring the coefficients except for the implicit velocity as
\begin{equation}
\label{eqn: iv-high-ode}\ddot{X} + 2\sqrt{\mu}\dot{X} +  \nabla f\left(X + \frac{\sqrt{s}\dot{X}}{1+2\sqrt{\mu s}} \right) = 0.
\end{equation}
with the same initial values of~\eqref{eqn: iv-high-ode-original}. Then, we show the principled way to construct a Lyapunov function for~\eqref{eqn: iv-high-ode} as hold.

\begin{itemize}
\item[\textbf{(1)}] With~\eqref{eqn: iv-high-ode}, we first calculate the mixed energy's time derivative as
\begin{align*}
\mathbf{I} = & \frac{\mathrm{d}}{\mathrm{d}t}\left(\frac12\|\dot{X}+2\sqrt{\mu}(X  - x^\star)\|^2 \right) \\
          = &\big\langle \ddot{X} + 2\sqrt{\mu}\dot{X}, \dot{X}+2\sqrt{\mu}(X -x^\star)\big\rangle\\
= &   \left\langle - \nabla f\left(X + \frac{\sqrt{s}\dot{X}}{1+2\sqrt{\mu s}} \right), \dot{X}+2\sqrt{\mu}(X -x^\star)\right\rangle \\
= & -\underbrace{\frac{\left\langle  \nabla f \left(X + \frac{\sqrt{s}\dot{X}}{1+2\sqrt{\mu s}} \right), \dot{X}\right\rangle}{1 + 2\sqrt{\mu s}}}_{\textbf{I}_{1}} - 2\sqrt{\mu} \left\langle \nabla f\left(X + \frac{\sqrt{s} \dot{X}}{1+2\sqrt{\mu s}}\right),  X + \frac{\sqrt{s}\dot{X}}{1+2\sqrt{\mu s}} - x^\star \right\rangle.
\end{align*}

\item[\textbf{(2)}] Then, for the kinetic energy, the time derivative along with~\eqref{eqn: iv-high-ode} is derived as
\begin{align*}
\mathbf{II} = \frac{\mathrm{d}}{\mathrm{d}t}\left( \frac12\|\dot{X}\|^2\right) = &\big\langle\ddot{X}, \dot{X}\big\rangle \\= &  \left\langle -2\sqrt{\mu}\dot{X} - (1+2\sqrt{\mu s}) \nabla f\left(X + \frac{\sqrt{s}\dot{X}}{1+2\sqrt{\mu s}}\right),\dot{X}\right\rangle \\
= & - \underbrace{\left(1+2\sqrt{\mu s}\right)\left\langle\dot{X},  \nabla f\left(X + \frac{\sqrt{s}\dot{X}}{1+2\sqrt{\mu s}}\right)\right\rangle}_{\textbf{II}_{1}}  - 2\sqrt{\mu}\|\dot{X}\|^{2}.
\end{align*}

\item[\textbf{(3)}] Finally, we calculate the time derivative of potential energy with~\eqref{eqn: iv-high-ode} as 
\begin{align*}
\mathbf{III}= \frac{\mathrm{d}}{\mathrm{d}t}\big( f(X) - f(x^\star) \big) = & \left\langle\nabla f\left(X + \frac{\sqrt{s}\dot{X}}{1+2\sqrt{\mu s}}\right), \dot{X} + \frac{\sqrt{s}\ddot{X}}{1+2\sqrt{\mu s}} \right\rangle\\
=&\left\langle\nabla f\left(X + \frac{\sqrt{s}\dot{X}}{1+2\sqrt{\mu s}}\right),  \frac{\dot{X} - \sqrt{s}  \nabla f\left(X + \frac{\sqrt{s}\dot{X}}{1+2\sqrt{\mu s}} \right)}{(1+2\sqrt{\mu s})}\right\rangle\\
= &\underbrace{ \frac{\left\langle  \nabla f\left(X + \frac{\sqrt{s}\dot{X}}{1+2\sqrt{\mu s}}\right), \dot{X}\right\rangle}{1+2\sqrt{\mu s}}}_{\textbf{III}_{1}} -  \left( \frac{\sqrt{ s}}{1 + 2\sqrt{\mu s} } \right)\left\| \nabla f\left(X + \frac{\sqrt{s}\dot{X}}{1+2\sqrt{\mu s}}\right)\right\|^2.
\end{align*}
\end{itemize}

%
Similarly, by comparing the coefficients, we find that there exists  the following relation among the three labeled terms above as
\[
\mathbf{I}_1 = \frac{\mathbf{II}_1}{(1+2\sqrt{\mu s})^2} = \mathbf{III}_1.
\]
Then, we construct the Lyapunov function corresponding to~\eqref{eqn: iv-high-ode} as
\begin{equation}
\mathcal{E}(t) = f\left(X + \frac{\sqrt{s}\dot{X}}{1+2\sqrt{\mu s}}\right) - f(x^\star) + \frac{\|\dot{X}\|^{2}}{4(1+2\sqrt{\mu s})^2}   
                                                                      + \frac{\|\dot{X}+2\sqrt{\mu}(X  - x^\star)\|^2}{4}. \label{eqn: lypaunov-ode-iv}
\end{equation}
With the Laypunov function~\eqref{eqn: lypaunov-ode-iv}, we formalize the convergence rate of objective values along the implicit-velocity high-resolution differential euqation~\eqref{eqn: iv-high-ode} with the following theorem.

\begin{theorem}
\label{thm:continuous-implcit-velocity}
Let $f\in \mathcal{S}^1_{\mu,L}$, then the solution $X = X(t)$ to the implicit-velocity high-resolution differential equation~\eqref{eqn: iv-high-ode} satisfies
\begin{equation}
\label{eqn: continuous}
f\left(X + \frac{\sqrt{s}\dot{X}}{1+2\sqrt{\mu s}} \right) - f(x^\star) \leq \frac{f(x_0) - f(x^\star) + \mu\|x_{0}-x^\ast\|^{2}}{2}\text{e}^{-\frac{\sqrt{\mu}t}{4}},
\end{equation}
for any $t \geq 0$.
\end{theorem}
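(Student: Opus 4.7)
The plan is to couple the three time-derivative computations $\mathbf{I}$, $\mathbf{II}$, $\mathbf{III}$ already laid out in the preamble into a single differential inequality $\dot{\mathcal{E}}(t) \leq -\frac{\sqrt{\mu}}{4}\mathcal{E}(t)$ and then integrate via Grönwall. Differentiating \eqref{eqn: lypaunov-ode-iv} along any trajectory of \eqref{eqn: iv-high-ode} gives
\[
\dot{\mathcal{E}}(t) = \mathbf{III} + \frac{1}{2(1+2\sqrt{\mu s})^2}\mathbf{II} + \frac{1}{2}\mathbf{I},
\]
where the $\tfrac12$'s come from pulling the derivative inside the $\tfrac12\|\cdot\|^2$'s that form each energy component. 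Using the coefficient relation $\mathbf{I}_1 = \mathbf{II}_1/(1+2\sqrt{\mu s})^2 = \mathbf{III}_1$ flagged just before the theorem, the three inner-product terms proportional to $\langle \nabla f(y), \dot{X}\rangle$ cancel exactly (weights $-\tfrac12, -\tfrac12, +1$), where I abbreviate $y = X + \sqrt{s}\dot{X}/(1+2\sqrt{\mu s})$.

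After cancellation the residual is a sum of three manifestly useful negative pieces: $-\sqrt{\mu}\langle \nabla f(y), y-x^\star\rangle$ from $\mathbf{I}$, $-\sqrt{\mu}\|\dot{X}\|^2/(1+2\sqrt{\mu s})^2$ from $\mathbf{II}$, and $-\frac{\sqrt{s}}{1+2\sqrt{\mu s}}\|\nabla f(y)\|^2$ from $\mathbf{III}$. I would then invoke $\mu$-strong convexity in the form $\langle \nabla f(y), y-x^\star\rangle \geq [f(y)-f(x^\star)] + \frac{\mu}{2}\|y-x^\star\|^2$, which converts the first of these into two useful contributions: a $-\sqrt{\mu}[f(y)-f(x^\star)]$ matching the potential part of $\mathcal{E}$, and a $-\tfrac12 \mu^{3/2}\|y-x^\star\|^2$ that will be needed to absorb the mixed-energy part of $\mathcal{E}$.

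The hard step is precisely this absorption, i.e., verifying that the three negative pieces above dominate $\frac{\sqrt{\mu}}{4}\mathcal{E}(t)$ component by component. For the potential and kinetic parts this is a trivial coefficient check, but the mixed-energy contribution $\frac{\sqrt{\mu}}{16}\|\dot{X}+2\sqrt{\mu}(X-x^\star)\|^2$ on the right-hand side has to be bounded by what remains on the left. My plan is to expand the squared norm, substitute $X - x^\star = (y-x^\star) - \sqrt{s}\dot{X}/(1+2\sqrt{\mu s})$ so the strong-convexity term $\|y-x^\star\|^2$ is what appears, and control the cross-terms by Young's inequality; the residual $\|\dot{X}\|^2$ pieces are then absorbed by the kinetic negative mass up to $O(\sqrt{s})$ corrections, and the $\|\nabla f(y)\|^2$ term with coefficient $-\frac{\sqrt{s}}{1+2\sqrt{\mu s}}$ provides additional slack. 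This is where the exact constant $\sqrt{\mu}/4$ gets forced.

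With the differential inequality in hand, Grönwall yields $\mathcal{E}(t) \leq \mathcal{E}(0)\,e^{-\sqrt{\mu} t/4}$. Plugging the initial data $X(0)=x_0$ and $\dot{X}(0)=0$ into \eqref{eqn: lypaunov-ode-iv} gives $\mathcal{E}(0) = [f(x_0)-f(x^\star)] + \mu\|x_0-x^\star\|^2$, and the left-hand side of \eqref{eqn: continuous} is the potential-energy summand of $\mathcal{E}(t)$ and hence bounded by $\mathcal{E}(t)$ itself; the extra factor $\tfrac12$ in the stated bound should emerge from the final matching of coefficients (equivalently from a clean $[f(y)-f(x^\star)] \leq \tfrac12 \mathcal{E}(t)$ estimate produced alongside the differential inequality).
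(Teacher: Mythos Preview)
Your approach is essentially the paper's. The paper also combines $\mathbf{I},\mathbf{II},\mathbf{III}$ with the weights you write, gets the same three negative residuals, applies the same strong-convexity inequality to $\langle\nabla f(y),y-x^\star\rangle$, and then compares term-by-term against an upper bound on $\mathcal{E}(t)$ to extract the rate $\sqrt{\mu}/4$.

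The only real difference is in the bookkeeping for the mixed-energy term. The paper does it in one stroke: it first rewrites $\dot X + 2\sqrt{\mu}(X-x^\star)=\frac{\dot X}{1+2\sqrt{\mu s}}+2\sqrt{\mu}(y-x^\star)$ (exactly your substitution), applies Cauchy--Schwarz to this two-term sum, and obtains the clean estimate $\mathcal{E}(t)\le [f(y)-f(x^\star)]+\tfrac{3}{4(1+2\sqrt{\mu s})^2}\|\dot X\|^2+2\mu\|y-x^\star\|^2$. Comparing coefficients with the derivative bound then gives $\min\{1,\tfrac43,\tfrac14\}=\tfrac14$, with the position term binding. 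Your plan to expand and use Young's inequality on cross-terms arrives at the same place but with more work; you do not actually need the spare $\|\nabla f(y)\|^2$ term. Either route is fine.

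On the factor $\tfrac12$: you are right to flag it. Plugging $X(0)=x_0$, $\dot X(0)=0$ into~\eqref{eqn: lypaunov-ode-iv} gives $\mathcal{E}(0)=[f(x_0)-f(x^\star)]+\mu\|x_0-x^\star\|^2$, and the potential summand is bounded by $\mathcal{E}(t)\le\mathcal{E}(0)e^{-\sqrt{\mu}t/4}$ without any extra $\tfrac12$. The paper's own proof does not produce that factor either (its final sentence invokes ``the basic $L$-smooth inequality'' without effect on this constant), so treat the $\tfrac12$ in the displayed statement as a typo rather than something your argument must manufacture.
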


\begin{proof}[Proof of~\Cref{thm:continuous-implcit-velocity}]
First, using Cauchy-Schwarz inequality, we estimate the Lyapunov function~\eqref{eqn: lypaunov-ode-iv} as
\begin{equation}
\label{eqn: lypaunov-ode-iv-estimate}
\mathcal{E}(t) \leq f\left(X + \frac{\sqrt{s}\dot{X}}{1+2\sqrt{\mu s}}\right) - f(x^\star) + \frac{3\|\dot{X}\|^{2}}{4(1+2\sqrt{\mu s})^2}   
                                                                      + 2\mu\|X  - x^\star\|^2.
\end{equation}
Then, together with  $\mathbf{I}$, $\mathbf{II}$ and $\mathbf{III}$ above, we calculate the time derivative of $\mathcal{E}(t)$ in~\eqref{eqn: lypaunov-ode-iv} as
\begin{align}
\frac{d\mathcal{E}(t)}{dt} &\leq  - \sqrt{\mu} \left( \frac{\|\dot{X}\|^{2}}{(1 + 2\sqrt{\mu s})^2} + \left\langle \nabla f\left(X + \frac{\sqrt{s} \dot{X}}{1+2\sqrt{\mu s}}\right),  X + \frac{\sqrt{s}\dot{X}}{1+2\sqrt{\mu s}} - x^\star \right\rangle \right) \nonumber \\
                        &\leq  - \sqrt{\mu} \left( f\left(X + \frac{\sqrt{s} \dot{X}}{1+2\sqrt{\mu s}} \right) - f(x^\star) + \frac{\|\dot{X}\|^{2}}{(1 + 2\sqrt{\mu s})^2} + \frac{\mu}{2} \left\|  X + \frac{\sqrt{s} \dot{X}}{1+2\sqrt{\mu s}} - x^\star\right\|^2 \right)        \label{eqn: im-continous-final}
\end{align}
where the last inequality follows that the basic inequality of $f \in \mathcal{S}_{\mu,L}^{1}$, that is,
\begin{multline*}
     \left\langle \nabla f\left(X + \frac{\sqrt{s} \dot{X}}{1+2\sqrt{\mu s}}\right),  X + \frac{\sqrt{s}\dot{X}}{1+2\sqrt{\mu s}} - x^\star \right\rangle \\
\geq  f\left(X + \frac{\sqrt{s} \dot{X}}{1+2\sqrt{\mu s}} \right) - f(x^\star) + \frac{\mu}{2} \left\|  X + \frac{\sqrt{s} \dot{X}}{1+2\sqrt{\mu s}} - x^\star\right\|^2.
\end{multline*}
 Comparing the coefficients of the corresponding terms in~\eqref{eqn: im-continous-final} and~\eqref{eqn: lypaunov-ode-iv-estimate} for $\mathcal{E}(t)$,  we find that the time derivative satisfies
\[
\frac{d\mathcal{E}(t)}{dt} \leq  - \sqrt{\mu} \min\left\{1, \frac13, \frac14 \right\} \mathcal{E}(t) = - \frac{\sqrt{\mu}}{4}\mathcal{E}(t). 
\]
Plugging the initial conditions, $X(0) = x_0 $ and $ \dot{X}(0) = 0$, into $\mathcal{E}(0)$, we complete the proof with the basic $L$-smooth inequality. 
\end{proof}

\subsection{The implicit-velocity scheme}
\label{subsec: iv-scheme}

Similar with~\citep{chen2022gradient},  we also takes the velocity iterates here as $v_{k} = (x_k - x_{k-1})/\sqrt{s}$. Then, the phase-space representation of the implicit-velocity scheme~\eqref{eqn: nag-x} is

\begin{equation}
\label{eqn: phase-nag-x}
\left\{
\begin{aligned}
           & x_{k+1} - x_{k} = \sqrt{s}v_{k+1},  \\
           & v_{k+1} - v_{k} = - \frac{2\sqrt{\mu s}v_{k}}{1+2\sqrt{\mu s}} - \sqrt{s}\nabla f\left(x_{k} + \frac{\sqrt{s}v_{k}}{1+2\sqrt{\mu s}}\right),
\end{aligned}
\right.
\end{equation}
with any initial $x_0 \in \mathbb{R}^d$ and $v_0 = 0$. With the phase-space representation~\eqref{eqn: phase-nag-x}, the second scheme of~\texttt{NAG}~\eqref{eqn: nag} can be reformulated as
 \begin{equation}
\label{eqn: nag-2-iv}
    y_{k+1} = x_{k+1} + \frac{\sqrt{s}v_{k+1}}{1+2\sqrt{\mu s}}.
\end{equation}
Next, we demonstrate how to construct a Lyapunov function by the principled way.


\paragraph{The principled way of constructing a Lyapunov function}

\begin{itemize}
\item[\textbf{(1)}] First, we consider the mixed energy. The core crus of calculating the iterative difference is 
\[
\left[ v_{k+2}  + 2\sqrt{\mu}(x_{k+2} - x^\star) \right] -  \left[ v_{k+1}  + 2\sqrt{\mu}(x_{k+1} - x^\star) \right] =  - \sqrt{s} \nabla f(y_{k+1}),
\]
which directly follows the second scheme of~\eqref{eqn: phase-nag-x}. Hence, the iterative difference of the mixed energy obeys
\begin{align*}
 \mathbf{I} =  & \frac{1}{2}\left\| v_{k+2}  + 2\sqrt{\mu}(x_{k+2} - x^\star) \right\|^2 -  \frac{1}{2}\left\| v_{k+1}  + 2\sqrt{\mu}(x_{k+1} - x^\star) \right\|^2 \\
=    &  \left\langle  - \sqrt{s} \nabla f(y_{k+1}),  v_{k+1}  + 2\sqrt{\mu}(x_{k+1} - x^\star) \right\rangle + \frac{s}{2}\| \nabla f(y_{k+1})\|^2  \\
=    &  \left\langle  - \sqrt{s} \nabla f(y_{k+1}),   \frac{v_{k+1}}{1+2\sqrt{\mu s}}  + 2\sqrt{\mu}(y_{k+1} - x^\star) \right\rangle + \frac{s}{2}\| \nabla f(y_{k+1})\|^2,
\end{align*}
where the last inequality follows~\eqref{eqn: nag-2-iv}. Furthermore, we reformulate $\mathbf{I}$ by the second scheme of~\eqref{eqn: phase-nag-x} as
\begin{equation}
\label{eqn: iv-1}
\mathbf{I} = - \underbrace{ \sqrt{s} \left\langle  \nabla f(y_{k+1}), v_{k+2}\right\rangle}_{\mathbf{I}_1}  - 2\sqrt{\mu s}  \left\langle \nabla f(y_{k+1}), y_{k+1} - x^\star \right\rangle - \frac{s}{2}   \|\nabla f(y_{k+1})\|^2. 
\end{equation}

\item[\textbf{(2)}] Then, we consider the kinetic energy. In the second line of the phase-space representation~\eqref{eqn: phase-nag-x}, there are only two terms on the right-hand side, which is the key difference of the implicit-velocity scheme from the gradient-correction scheme~\eqref{eqn: phase-nag-y}.  The detailed calculation of iterative difference is shown as
\begin{align}
\mathbf{II} & =  \frac12 \|v_{k+2}\|^2 - \frac12 \|v_{k+1}\|^2 \nonumber \\
            & =  \frac{1}{2} \|v_{k+2}\|^2 - \frac{(1+ 2\sqrt{\mu s})^2}{2} \|v_{k+2} + \sqrt{s} \nabla f(y_{k+1})\|^2 \nonumber \\
            & =  - \underbrace{(1+ 2\sqrt{\mu s})^2 \sqrt{s} \langle \nabla f(y_{k+1}), v_{k+2} \rangle}_{\mathbf{II}_1} - 2\sqrt{\mu s}(1 + \sqrt{\mu s}) \|v_{k+2}\|^2 \nonumber \\
            & \mathrel{\phantom{ =  - \underbrace{(1+ 2\sqrt{\mu s})^2 \sqrt{s} \langle \nabla f(y_{k+1}), v_{k+2} \rangle}_{\mathbf{II}_1}}}  - \frac{s(1+ 2\sqrt{\mu s})^2}{2}\| \nabla f(y_{k+1})\|^2 . \label{eqn: iv-2}
\end{align}

\item[\textbf{(3)}] Finally, with the basic $L$-smooth inequality, we calculate the iterative difference of potential energy as
\begin{align}
\mathbf{III}  & = f(y_{k+1}) - f(y_{k}) \nonumber \\
              & \leq \langle \nabla f(y_{k+1}), y_{k+1} - y_{k} \rangle - \frac{1}{2L} \| \nabla f(y_{k+1}) - \nabla f(y_{k}) \|^2 \nonumber \\
              & = \langle \nabla f(y_{k+1}), x_{k+2} - x_{k+1} \rangle + s\langle \nabla f(y_{k+1}), \nabla f(y_{k+1}) - \nabla f(y_{k}) \rangle  - \frac{1}{2L} \| \nabla f(y_{k+1}) - \nabla f(y_{k}) \|^2 \nonumber\\
              & = \underbrace{\sqrt{s} \langle \nabla f(y_{k+1}), v_{k+2} \rangle}_{\mathbf{III}_1} - \frac{1}{2}\left(\frac{1}{L} - s \right) \| \nabla f(y_{k+1}) - \nabla f(y_{k}) \|^2 \nonumber \\
              & \mathrel{\phantom{ = \underbrace{\sqrt{s} \langle \nabla f(y_{k+1}), v_{k+2} \rangle}_{\mathbf{III}_1}}}   + \frac{s}{2} \|\nabla f(y_{k+1})\|^2 - \frac{s}{2} \|\nabla f(y_{k})\|^2, \label{eqn: iv-3}
\end{align}
where the second equality follows from the gradient step of~\texttt{NAG}~\eqref{eqn: nag}. 


\end{itemize}

Consistent with the continuous case, we also find that the three labeled terms above have the following relation as
\[
\mathbf{I}_1 = \frac{\mathbf{II}_1}{(1+2\sqrt{\mu s})^2} = \mathbf{III}_1.
\]
Then, we construct the Lyapunov function corresponding to~\texttt{NAG} as
\begin{equation}
\mathcal{E}(k) = f(y_{k}) - f(x^{\star}) + \frac{\|v_{k+1}\|^2}{4(1+2\sqrt{\mu s})^2} +  \frac{\left\|v_{k+1} +2\sqrt{\mu}(x_{k+1} - x^\star)\right\|^2 }{4}.\label{eqn: lyapunov-iv2}
\end{equation}

We also can obtain the accelerated convergence only using the Lyapunov function~\eqref{eqn: lyapunov-iv2} without the additional term, but the proportional constant will become too small. We will discuss it at the end of this section.


\begin{theorem}
\label{thm: rate-iv}
Let $f\in\mathcal{S}_{\mu,L}^{1}$. For any step size $0 < s \leq 1/L$, the iterates $\{x_{k}\}_{k=0}^{\infty}$ generated by \texttt{NAG}~\eqref{eqn: nag} obeys
\begin{equation}
\label{eqn: convergence-rate}
f(y_{k}) - f(x^\star) \leq \frac{4L\|x_{0}-x^\star\|^{2}}{\left(1+ \frac{\sqrt{\mu s}}{4}\right)^{k}},
\end{equation}
for any $k\geq0$.
\end{theorem}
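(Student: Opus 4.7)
The plan is to sum the three iterative differences $\mathbf{I}$, $\mathbf{II}$, $\mathbf{III}$ with the weights dictated by~\eqref{eqn: lyapunov-iv2}, namely
\[
\mathcal{E}(k+1) - \mathcal{E}(k) = \mathbf{III} + \frac{\mathbf{II}}{2(1+2\sqrt{\mu s})^2} + \frac{\mathbf{I}}{2}.
\]
The observed proportionality $\mathbf{I}_1 = \mathbf{II}_1/(1+2\sqrt{\mu s})^2 = \mathbf{III}_1$ makes the three copies of $\sqrt{s}\langle\nabla f(y_{k+1}), v_{k+2}\rangle$ cancel, since the signed coefficients $-\tfrac{1}{2}-\tfrac{1}{2}+1$ sum to zero. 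The coefficients of $\|\nabla f(y_{k+1})\|^2$ cancel as well, since $-\tfrac{s}{4}-\tfrac{s}{4}+\tfrac{s}{2}=0$, leaving only the non-positive residual $-\tfrac{s}{2}\|\nabla f(y_k)\|^2$ inherited from the potential-energy difference~\eqref{eqn: iv-3}. Together with the gradient-difference term $-\tfrac{1}{2}(1/L-s)\|\nabla f(y_{k+1})-\nabla f(y_k)\|^2$, both can be dropped for $s\leq 1/L$, yielding the clean one-step bound
\[
\mathcal{E}(k+1) - \mathcal{E}(k) \leq -\sqrt{\mu s}\left[\langle \nabla f(y_{k+1}), y_{k+1} - x^\star\rangle + \frac{1+\sqrt{\mu s}}{(1+2\sqrt{\mu s})^2}\|v_{k+2}\|^2\right].
\]

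Next, I would expose $f(y_{k+1})-f(x^\star)$, $\|\nabla f(y_{k+1})\|^2$, and $\|y_{k+1}-x^\star\|^2$ by bounding the inner product below via a convex combination of the co-coercivity inequality $\langle \nabla f(y_{k+1}), y_{k+1}-x^\star\rangle \geq f(y_{k+1})-f(x^\star) + \tfrac{1}{2L}\|\nabla f(y_{k+1})\|^2$ and the strong-convexity inequality $\langle \nabla f(y_{k+1}), y_{k+1}-x^\star\rangle \geq \mu\|y_{k+1}-x^\star\|^2$, mimicking the gradient-correction proof of~\Cref{thm: rate-gc}. In parallel, an upper bound on $\mathcal{E}(k+1)$ is obtained via Cauchy--Schwarz on the mixed energy; rewriting $x_{k+2}-x^\star = (y_{k+1}-x^\star) - s\nabla f(y_{k+1})$ through the gradient step of~\eqref{eqn: nag} converts the mixed energy into terms involving $\|v_{k+2}\|^2$, $\|y_{k+1}-x^\star\|^2$, and $\|\nabla f(y_{k+1})\|^2$, each of which already appears in the lower bound.

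Then I would compare the four term-by-term coefficients ($f(y_{k+1})-f(x^\star)$, $\|\nabla f(y_{k+1})\|^2$, $\|y_{k+1}-x^\star\|^2$, $\|v_{k+2}\|^2$) and take the minimum to obtain the contraction $\mathcal{E}(k+1) \leq \mathcal{E}(k)/(1+\sqrt{\mu s}/4)$. Iterating and using $f(y_k)-f(x^\star) \leq \mathcal{E}(k)$ (the kinetic and mixed energies being non-negative) gives $f(y_k) - f(x^\star) \leq \mathcal{E}(0)/(1+\sqrt{\mu s}/4)^k$. To close the proof, the initial conditions $y_0 = x_0$, $v_0 = 0$ force $v_1 = -\sqrt{s}\nabla f(x_0)$ and $x_1 = x_0 - s\nabla f(x_0)$; combining the $L$-smooth bounds $f(x_0)-f(x^\star) \leq \tfrac{L}{2}\|x_0-x^\star\|^2$ and $\|\nabla f(x_0)\| \leq L\|x_0-x^\star\|$ with $sL \leq 1$ yields $\mathcal{E}(0) \leq 4L\|x_0-x^\star\|^2$ up to the stated constant.

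The main obstacle is the coefficient-matching step. A naive upper bound $\|v_{k+2}+2\sqrt{\mu}(x_{k+2}-x^\star)\|^2 \leq 2\|v_{k+2}\|^2 + 8\mu\|x_{k+2}-x^\star\|^2$ followed by $\|x_{k+2}-x^\star\|^2 \leq 4\|y_{k+1}-x^\star\|^2$ would only yield a contraction factor on the order of $1/32$ rather than the target $1/4$. Achieving the sharper $1/4$ requires a more carefully weighted Cauchy--Schwarz split and a convex-combination parameter $\alpha = 1/4$ in the strong-convexity step, analogous to the balancing already performed in the derivation of~\eqref{eqn: total-gc-7} in the gradient-correction proof.
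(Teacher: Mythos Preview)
Your one-step decrease computation is correct and matches the paper exactly: combining $\mathbf{I},\mathbf{II},\mathbf{III}$ with the weights $\tfrac12,\tfrac{1}{2(1+2\sqrt{\mu s})^2},1$ does produce
\[
\mathcal{E}(k+1)-\mathcal{E}(k)\le-\sqrt{\mu s}\Bigl[\langle\nabla f(y_{k+1}),y_{k+1}-x^\star\rangle+\tfrac{1+\sqrt{\mu s}}{(1+2\sqrt{\mu s})^2}\|v_{k+2}\|^2\Bigr],
\]
which is precisely the paper's~\eqref{eqn: iterative-difference-iv-lyp}. The overall contraction strategy (upper-bound $\mathcal{E}(k+1)$, compare coefficients, take the minimum) is also the paper's.

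The gap is in how you upper-bound the mixed energy. Your plan keeps a separate $\|\nabla f(y_{k+1})\|^2$ term via Cauchy--Schwarz and compensates with co-coercivity in the lower bound, matching four quantities. This cannot reach the factor $1/4$: in the extreme case $\mu=L$, $s=1/L$, a three-term split $\|a+b+c\|^2\le A\|a\|^2+B\|b\|^2+C\|c\|^2$ with $1/A+1/B+1/C=1$ forces $C>1$, hence the gradient ratio $\tfrac{\alpha/(2L)}{C\mu s^2}\ge\tfrac14$ requires $\alpha>1/2$, while the distance ratio then forces $B<2$, and the constraint $1/A+1/B+1/C=1$ becomes infeasible. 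No weighted Cauchy--Schwarz split rescues this. The paper instead expands $\|y_k-s\nabla f(y_k)-x^\star\|^2$ in full and uses co-coercivity \emph{inside the upper bound} to absorb the gradient: since $2s\langle\nabla f(y_k),y_k-x^\star\rangle\ge s^2\|\nabla f(y_k)\|^2$ for $s\le 1/L$, one gets directly $\|v_{k+1}+2\sqrt{\mu}(x_{k+1}-x^\star)\|^2\le 2\|v_{k+1}\|^2+8\mu\|y_k-x^\star\|^2$. Only three quantities remain to match, and the lower bound needs only the single strong-convexity inequality $\langle\nabla f(y_{k+1}),y_{k+1}-x^\star\rangle\ge f(y_{k+1})-f(x^\star)+\tfrac{\mu}{2}\|y_{k+1}-x^\star\|^2$---no convex combination---with the binding ratio $\tfrac{\mu/2}{2\mu}=\tfrac14$ coming from the $\|y-x^\star\|^2$ term. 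So the fix is not ``a more carefully weighted Cauchy--Schwarz'' but rather the non-expansiveness of the gradient step toward $x^\star$, which makes the implicit-velocity proof simpler than the gradient-correction one, not analogous to it.
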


\begin{proof}[Proof of~\Cref{thm: rate-iv}]
From the implicit-velocity scheme~\eqref{eqn: nag-x}, only two steps are enough to present the proof. 
\begin{itemize}
\item[\textbf{(1)}] With the gradient step of~\texttt{NAG}~\eqref{eqn: nag}, we can estimate the mixed energy by Cauchy-Schwarz inequality as
\begin{align*}
\|v_{k+1} +2\sqrt{\mu}(x_{k+1} - x^\star) \|^2  & =     \left\| v_{k+1} +2\sqrt{\mu}(y_k - s\nabla f(y_k) - x^\star)  \right\|^2 \\
                                                & \leq  2 \|v_{k+1}\|^2 + 8 \mu \|y_k - s\nabla f(y_k) - x^\star\|^2             \\ 
                                                & =     2 \|v_{k+1}\|^2 + 8 \mu \|y_k  - x^\star\|^2 +  8 \mu s^2 \| \nabla f(y_k) \|^2 - 16 \mu s \langle \nabla f(y_k), y_k - x^\star \rangle \\
                                                & \leq  2 \|v_{k+1}\|^2 + 8 \mu \|y_k  - x^\star\|^2,     
\end{align*}
where the last inequality works for the step size $0<s\leq 1/L$. Furthermore, the Lyapunov function~\eqref{eqn: lyapunov-iv2} can be estimated as
\begin{align*}
\mathcal{E}(k) & \leq f(y_{k}) - f(x^{\star}) +   \frac{\left(\frac{3}{4}+\sqrt{\mu s} + \mu s\right)}{(1+2\sqrt{\mu s})^2} \|v_{k+1}\|^2  + 2\mu \|y_k - x^\star\|^2. 
\end{align*}
\item[\textbf{(2)}]
With~\eqref{eqn: iv-1},~\eqref{eqn: iv-2} and~\eqref{eqn: iv-3}, the iterative difference of the Lyapunov function~\eqref{eqn: lyapunov-iv2} can be estimated further as
\begin{multline}
\mathcal{E}(k+1) - \mathcal{E}(k)   \leq  -  \sqrt{\mu s}  \left\langle \nabla f(y_{k+1}), y_{k+1} - x^\star \right\rangle - \frac{(\sqrt{\mu s} + \mu s) \|v_{k+2}\|^2}{(1+2\sqrt{\mu s})^2} \\
                                                      \leq -\sqrt{\mu s} \left[ f(y_{k+1}) - f(x^\star) + \frac{(1 + \sqrt{\mu s} ) \|v_{k+2}\|^2}{(1+2\sqrt{\mu s})^2}  + \frac{\mu \|y_{k+1} - x^\star\|^2 }{2} \right], \label{eqn: iterative-difference-iv-lyp} 
\end{multline}
where the last inequality follows from the inequality of $f \in \mathcal{S}_{\mu,L}^{1}$
\[
\left\langle \nabla f(y_{k+1}), y_{k+1} - x^\star \right\rangle \geq f(y_{k+1}) - f(x^\star) + \frac{\mu}{2} \|y_{k+1} - x^\star\|^2.
\]
Comparing the coefficients of the corresponding terms in~\eqref{eqn: iterative-difference-iv-lyp} and~\eqref{eqn: lyapunov-iv2} for $\mathcal{E}(k+1)$, we obtain that the iterative difference satisfies
\[
\mathcal{E}(k+1) - \mathcal{E}(k)  \leq -\sqrt{\mu s} \min\left\{ 1, \frac{1+\sqrt{\mu s}}{\frac{3}{4}+\sqrt{\mu s} + \mu s}, \frac14 \right\} \mathcal{E}(k+1) = - \frac{\sqrt{\mu s}}{4}  \mathcal{E}(k+1).
\]
Plugging the initial conditions $x_0 \in \mathbb{R}^d$ and $v_1 = 0$ into $\mathcal{E}(0)$, we complete the proof.
\end{itemize}
\end{proof}

If we set the first velocity iterate as $v_1 = 2\sqrt{\mu s} \nabla f(y_{k})$, then the estimate of the initial Lyapunov function can be simplified. Then, we conclude this section with the following corollary. 
\begin{corollary}
\label{thm: rate-iv-x}
Let $f\in\mathcal{S}_{\mu,L}^{1}$. If the step size  is set $s = 1/L$, the iterates $\{x_{k}\}_{k=0}^{\infty}$ generated by \texttt{NAG}~\eqref{eqn: nag} obeys
\begin{equation}
\label{eqn: con-rate}
f(x_{k}) - f(x^\star) \leq \frac{2(f(x_0) - f(x^\star)) + \mu\|x_{0}-x^\star\|^{2}}{\left(1+ \frac{1}{4}\sqrt{\frac{\mu}{L}} \right)^{k}},
\end{equation}
for any $k\geq0$.
\end{corollary}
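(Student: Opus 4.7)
The plan is to reuse the Lyapunov decay already established in \Cref{thm: rate-iv}, bridge from $f(y_k)$ to $f(x_k)$ via the descent property of the gradient step in \eqref{eqn: nag}, and then exploit the prescribed initialization to bound $\mathcal{E}(0)$ cleanly in terms of $f(x_0) - f(x^\star)$ and $\|x_0 - x^\star\|^2$.

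First, \Cref{thm: rate-iv} specialized to $s = 1/L$ immediately gives
\[
\mathcal{E}(k) \leq \frac{\mathcal{E}(0)}{\bigl(1 + \tfrac{1}{4}\sqrt{\mu/L}\bigr)^{k}},
\]
and the potential-energy part of the Lyapunov function \eqref{eqn: lyapunov-iv2} yields $f(y_k) - f(x^\star) \leq \mathcal{E}(k)$. Next, the $L$-smooth descent lemma applied to $x_{k+1} = y_k - s\nabla f(y_k)$ shows
\[
f(x_{k+1}) \leq f(y_k) - \frac{s}{2}\|\nabla f(y_k)\|^2 \leq f(y_k)
\]
whenever $s \leq 1/L$, so $f(x_{k+1}) - f(x^\star) \leq \mathcal{E}(k)$. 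Re-indexing gives $f(x_k) - f(x^\star) \leq \mathcal{E}(k-1)$ for $k \geq 1$ (the $k=0$ case is trivial from the potential term in $\mathcal{E}(0)$), which after absorbing one extra factor of $(1 + \tfrac{1}{4}\sqrt{\mu/L}) \leq 2$ into the numerator produces a bound of the form $f(x_k) - f(x^\star) \leq 2\mathcal{E}(0)/(1 + \tfrac14\sqrt{\mu/L})^k$.

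The crux of the proof is then to evaluate $\mathcal{E}(0)$ under the prescribed initialization. With $y_0 = x_0$ and $x_1 = x_0 + \sqrt{s}\, v_1$, the mixed-energy term rewrites as $\|(1+2\sqrt{\mu s})v_1 + 2\sqrt{\mu}(x_0 - x^\star)\|^2/4$, so the specific scaling of $v_1$ is engineered to cancel the dominant linear contribution from the cross-term $\langle v_1, 2\sqrt{\mu}(x_0 - x^\star)\rangle$ after Cauchy--Schwarz, while the $L$-smoothness inequality $\|\nabla f(x_0)\|^2 \leq 2L(f(x_0) - f(x^\star))$ absorbs any residual kinetic contribution into $f(x_0) - f(x^\star)$. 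The target bound is
\[
\mathcal{E}(0) \leq (f(x_0) - f(x^\star)) + \tfrac{1}{2}\mu\|x_0 - x^\star\|^2,
\]
which combined with the factor $(1 + \tfrac14\sqrt{\mu/L})$ from the re-indexing yields the stated coefficients $2$ and $1$.

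The main obstacle is getting the constants to match exactly: the cross-term in the mixed energy is sign-indefinite, and one must balance it against the kinetic term $\|v_1\|^2/[4(1+2\sqrt{\mu s})^2]$ and the $\mu\|x_0 - x^\star\|^2$ slack without blowing up either coefficient. Once $\mathcal{E}(0)$ is pinned down in this clean form, the rest of the proof is essentially bookkeeping around \Cref{thm: rate-iv} and the gradient-step descent inequality.
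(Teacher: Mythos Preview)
Your proposal is correct and follows essentially the same approach as the paper, which in fact provides only a one-line hint (set $v_1$ proportional to $\sqrt{\mu s}\,\nabla f(y_0)$ so that $\mathcal{E}(0)$ simplifies) rather than a detailed proof. Your three-step plan---reuse the Lyapunov decay from \Cref{thm: rate-iv}, pass from $f(y_k)$ to $f(x_{k+1})$ via the gradient-step descent inequality, and absorb the re-indexing factor $1+\tfrac14\sqrt{\mu/L}\le 2$ into the numerator---is exactly the intended argument, and your identification of the role of the prescribed $v_1$ in controlling the cross-term of the mixed energy is the point the paper leaves implicit.
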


%

%

\section{Discussion and conclusion}
\label{sec: discussion}

In this study, we construct two Lyapunov functions~\eqref{eqn: lyp-gc} and~\eqref{eqn: lyapunov-iv2} for the gradient-correction scheme and the implicit-velocity scheme, respectively. Indeed, we find that both the potential and the mixed energy are identical in the Lyapunov functions~\eqref{eqn: lyp-gc} and~\eqref{eqn: lyapunov-iv2} with the only difference located in the kinetic energy, which is also the essential difference from the convex case. Moreover, the computation from the implicit-velocity scheme is superior to the gradient-correction scheme for the $\mu$-strongly convex function. Collectively, the high-resolution differential equation framework from the implicit-velocity scheme is tailor-made for the~\texttt{NAG}s. 

Based on the high-resolution differential equation framework, we obtain the convergence rate of objective values in both the schemes~\eqref{eqn: rate-gc} and~\eqref{eqn: convergence-rate} is $(1+\sqrt{\mu s}/4)^{-k}$, where the coefficient $1/4$ can be definitely improved. However, whether the coefficient $1/4$ can be enlarged to $1$, consistent with~\citep[Theorem 2.2.2]{nesterov1998introductory}, is still unknown. Furthermore, the lower bound given in~\citep[Theorem 2.1.13]{nesterov1998introductory} comes from an infinite-dimensional example. In other words, for any finite dimension $d$, the lower bound is only valid before the iterative time $k$ is no more than $d$. Indeed, this is similar to the convex case, and there is also no lower bound in infinite iterates for the $\mu$-strongly convex function.

Recall the vanilla gradient descent, which is always accompanied by monotonic convergence. The monotonicity can guarantee that the objective value at the current iteration is always better than the previous ones, so we easily choose the optimal one in practice. However, it has been pointed out in~\citep{danilova2020non} that accelerated first-order methods are not guaranteed to be monotonic. Take a quadratic function $f(y) = \frac12 y^{T}\Lambda y$ with $\Lambda = \text{diag}(\lambda_1, \ldots, \lambda_d)$ as  an example. Plugging it into the modified~\texttt{NAG}~\eqref{eqn: phase-nag-y}, we can obtain  
\[
(1+2\sqrt{\mu s})y_{k+1} - (1 - \lambda_i s + \sqrt{\mu s})y_k + (1 - \lambda_i s)y_{k-1} = 0,
\]
where $i=1,\ldots, d$. With the rule of numerical stability, the corresponding characteristic equation of $y_{k}^{(i)}$ can be written down as
\[
(1+2\sqrt{\mu s})\alpha^2 - (1 - \lambda_i s + \sqrt{\mu s})\alpha + 1 - \lambda_i s = 0,
\]
where $i=1,\ldots, d$. For the special case, the monotonic convergence of~\eqref{eqn: phase-nag-y} is reduced to that the roots of the quadratic equation are real. In other words, the discriminant is no less than zero, that is, $s \geq (\lambda_i - \mu)/\lambda_i^2$. Taking the maximum, we have 
\[
\max_{\lambda_i \in \mathbb{R}} \frac{\lambda_i - \mu}{\lambda_i^2} = \frac{1}{4\mu}.
\]
Then, if we assume that the condition satisfies $\max_{1\leq i \leq d} \lambda_i = L \leq 4\mu$, the modified~\texttt{NAG}~\eqref{eqn: phase-nag-y} always possess the accelerated convergence accompanied by monotonicity when the step size $s$ is chosen in $[1/(4\mu), 1/L]$. As a result, both acceleration and monotonicity can coexist. Hence, an interesting research direction is designing new algorithms to simultaneously accelerate the vanilla gradient descent accompanied by monotonicity.

{\small
\subsection*{Acknowledgments}
We would like to thank Bowen Li for helpful discussions.
\bibliographystyle{abbrvnat}
\bibliography{reference}
}
\end{document}